\documentclass{tran-l}
\usepackage[T1]{fontenc}
\usepackage{amssymb}
\usepackage{mathrsfs}
\usepackage{hyperref}
\hypersetup{hidelinks,pdftitle={A counterexample to the local existence of conformal normal metrics},pdfauthor={Zhe Zhang}}
\makeatletter
\AtBeginDocument{\let\@serieslogo\@empty}
\makeatother
\newtheorem{theorem}{Theorem}[section]
\newtheorem{lemma}[theorem]{Lemma}

\theoremstyle{definition}

\newtheorem{question}[theorem]{Question}
\theoremstyle{remark}
\newtheorem{remark}[theorem]{Remark}
\numberwithin{equation}{section}
\newcommand{\R}{\mathbb R}
\newcommand{\dd}{\,d}
\newcommand{\ip}[2]{\langle #1,#2\rangle}

\DeclareMathOperator{\supp}{supp}
\DeclareMathOperator{\dist}{dist}
\DeclareMathOperator{\divg}{div}

\begin{document}
\title[Local existence of conformal normal metrics]{A counterexample to the local existence of conformal normal metrics}
\author{Zhe Zhang}
\address{School of Arts and Sciences,
North China Institute of Aerospace Engineering,
Aimin East Road, Langfang, Hebei 065099, China}
\email{zhangzhe@nciae.edu.cn}
\subjclass[2020]{Primary 53C18; Secondary 35J60, 35A01}
\keywords{Conformal normal metrics, spherical harmonics, local solvability}
\date{}
\begin{abstract}
For every dimension $n\ge3$, we construct a smooth Riemannian metric whose entire Taylor series at a point agrees with the Euclidean metric and whose conformal class contains no smooth conformal normal metric near that point. For the metric used in the construction, parametrized by a smooth angular function, we prove that conformal normalization forces a dimension-dependent exponential decay rate for the spherical harmonic components of that function. This gives counterexamples whenever the angular coefficients decay too slowly. In particular, every nonanalytic angular function yields a counterexample. The proof combines a weighted cylinder estimate with an integral identity relating the angular spectral coefficients of the background volume defect to the nonlinear remainder. We also give a direct counterexample to the linear right inverse used in Cao's smooth existence argument and identify the failure of angular summability in that construction.\end{abstract}
\maketitle
\section{Introduction}\label{sec:introduction}
A Riemannian metric $g$ is \emph{conformal normal at a point $p$} if,
in $g$-normal coordinates at $p$,
\begin{equation}\label{densityone}
 \det(g_{ij})=1
\end{equation}
on a neighborhood of $p$. The local existence problem asks whether a given smooth metric $g_0$ has a smooth positive conformal factor $\Phi$
such that $g=\Phi g_0$ is conformal normal at $p$.  Lee and Parker \cite[Theorem~5.1]{LeeParker} proved that for each
prescribed finite order $N$, a smooth Riemannian metric can be
conformally changed so that its determinant in geodesic normal
coordinates is $1+O(|x|^N)$. G\"unther \cite{Gunther1986} established local conformal normalization for real-analytic metrics; see also~\cite[p.~173]{Gunther1993}. Cao \cite{Cao} asserted local
existence for arbitrary smooth metrics. The distinction between
jet-level normalization and normalization on a neighborhood is
essential: we construct smooth metrics for which the second assertion
fails even though the entire Taylor series is Euclidean.

Throughout, $n\ge3$, $S=S^{n-1}$ is the unit sphere,
$h_0$ is its round metric, and $d\omega$ is its volume measure.
We write $x=r\theta$ for Euclidean polar coordinates, with
$r=|x|$, and denote the Euclidean metric by $\delta$.
Our Laplacian convention is $\Delta=\divg\nabla$.
The operators with subscript $S$ refer to the round
metric unless another metric is indicated. Let $\Pi_\ell$ be
orthogonal projection in $L^2(S,d\omega)$ onto the spherical harmonics of degree $\ell$, and put $A_\ell=\Pi_\ell A$.

For a metric $g$, let $\rho$ denote its distance from $0$ on a normal
neighborhood.  The defining
condition \eqref{densityone} is equivalent to
\begin{equation}\label{cn}
 H_\rho=\frac{1}{n-1}\Delta_g\rho=\frac1\rho.
\end{equation}

\begin{theorem}\label{main}
Let $A\in C^\infty(S;\R)$. For $r>0$ sufficiently small, set
$s=-\ln r$ and define
\begin{equation}\label{metric}
 \begin{gathered}
 F(s)=\int_s^\infty e^{-\zeta^2}\dd\zeta,\qquad
 q(s,\theta)=F(s)A(\theta),\\
 \kappa(s,\theta)=e^{-2q(s,\theta)/(n-1)},\qquad
 g_A=dr^2+r^2\kappa^{-1}h_0.
 \end{gathered}
\end{equation}
The metric $g_A$ extends to a smooth Riemannian metric across $0$, with
the same entire Taylor series as $\delta$.
Suppose there exists a number $r_1>0$ and a smooth function $\Phi$ on
$B_{r_1}(0)$ such that $\Phi g_A$ is conformal normal at $0$ on this
ball. Then there exists a sufficiently small number $r_0<r_1$ such that
\begin{equation}\label{mainrate}
 \|A_\ell\|_{L^2(S)}\le Ce^{-c_n\ell}\quad(\ell\ge0),
 \qquad c_n=\frac1{16(n-1)},
\end{equation}
where $C>0$ depends on $n$, $r_0$, $\|A\|_{C^1}$, and
$\|\Phi\|_{C^1(\overline B_{r_0}(0))}$.
In particular, $A$ is real analytic on $S$.
\end{theorem}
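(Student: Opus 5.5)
We first dispose of the smoothness claim and then turn to the rate \eqref{mainrate}. Since $F(s)\le e^{-s^2}/s$ for large $s$ and $s=-\ln r$, we have $F(-\ln r)=O(r^N)$ for every $N$, together with all $r\partial_r$-derivatives. Hence $q$ and $\kappa^{-1}-1$ vanish to infinite order at $r=0$ in the scaled sense. Writing $g_A-\delta=r^2(\kappa^{-1}-1)h_0$ and converting to Cartesian coordinates shows that $g_A$ extends smoothly with Euclidean Taylor series. Moreover $\rho=r$ is the $g_A$-distance and the radial lines are geodesics, so $x$ are normal coordinates for $g_A$. The volume density of $g_A$ relative to $\delta$ is $\kappa^{-(n-1)/2}=e^{q}$. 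Thus $q$ is exactly the ``background volume defect'' in the log-density.

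Next we set up the nonlinear equation. Write $\Phi=e^{2u}$ and $g=e^{2u}g_A$. Let $\rho$ be the $g$-distance from $0$ and $\exp_g$ the $g$-normal chart. Condition \eqref{cn}, or equivalently \eqref{densityone}, says that the pullback of the $g$-volume to the normal chart is Euclidean. Compose this with the map from $g$-normal to $g_A$-normal coordinates, which is a diffeomorphism $x=\Psi(y)$ close to the identity with $\Psi-\mathrm{id}=O(|y|^2)$ controlled by $\|\Phi\|_{C^1}$ on small balls. This yields a pointwise identity of the form
\[
 q(s,\theta)+n\,u(x)+\ln\operatorname{Jac}\Psi^{-1}(x)=0 .
\]
We then express the geodesic correction through the Jacobi-field/Riccati equation for $H_\rho$ along $g$-geodesics. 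In cylinder variables $(s,\theta)$, with $t=s$, the linearization in $u$ becomes an operator $L=\partial_s^2-(n-2)\partial_s+\Delta_S$ acting roughly as $q=-L u+\mathcal N(u,q)$. Here $\mathcal N$ is quadratic, involves first and second derivatives, and carries weights $e^{-s}$, since $u-u(0)$ is $O(r)$ and gradients scale as $e^{-s}$ in cylinder coordinates.

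The key step is the integral identity. Project onto degree $\ell$, so that $L$ acts as $\partial_s^2-(n-2)\partial_s-\ell(\ell+n-2)$. The homogeneous solutions are $e^{\lambda_\pm s}$ with $\lambda_+\approx \ell$, corresponding to $r^{-\ell}$, and $\lambda_-\approx-\ell$. Pair the equation with the growing solution $e^{\lambda_+^\ell s}$ multiplied by a cutoff equal to $1$ on $[s_0,\infty)$, and integrate over $[s_0,\infty)$. The decaying-at-infinity structure (Taylor-flat data) kills the boundary term at infinity, since $u_\ell$, being smooth, satisfies $u_\ell=O(r^\ell)$. This gives
\[
 A_\ell\int_{s_0}^\infty F(s)e^{\lambda_+ s}\,ds=\text{boundary terms at }s_0+\int_{s_0}^\infty e^{\lambda_+s}\,\Pi_\ell\mathcal N\,ds .
\]
The left factor is at least $F(s_1)e^{\lambda_+ s_1}$ for any $s_1$. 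Choosing $s_1\sim c\ell$ makes it about $e^{c\ell^2}$-size, comparable to the right side only if $A_\ell$ is exponentially small. This is the stage where the balance between $e^{-s^2}$ and $e^{\ell s}$ produces the rate.

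The main obstacle is bounding the right side. We need a weighted cylinder estimate showing that $\Pi_\ell\mathcal N$ is, in the norm paired against $e^{\lambda_+s}$, controlled by $e^{-c\ell}$ times the left-side size. The boundary terms at $s_0$ are bounded by $e^{\lambda_+s_0}\|\Phi\|_{C^1}$, and are harmless after dividing by the large integral if we use $s_0$ fixed and $s_1\approx\ell/(8(n-1))$. For the nonlinear term, we first attempt a weighted $L^2$ estimate with weight $e^{2\mu s}$ on the cylinder. We would use that $\mathcal N$ involves products in which one factor is at most $\|u\|_{C^1}e^{-s}$. This lets us absorb $\mathcal N$ into $q$-terms and iterate, giving the constant $c_n=1/(16(n-1))$ after optimizing $s_1$. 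Exponential decay of $\|A_\ell\|$ then implies, by standard spherical harmonic characterization, that $A$ is real analytic on $S$.
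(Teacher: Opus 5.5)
You have the right skeleton: project onto degree $\ell$, pair with a growing exponential, let the Gaussian $e^{\lambda s-s^2}$ produce a factor $e^{\lambda^2/4}$, and control the remainder with a weighted cylinder estimate. Your smoothness paragraph is also fine. The gaps are in the equation, the unknown, and the estimate.

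\textbf{The equation and the unknown.} The normalization condition is not a local PDE in $u=\frac12\ln\Phi$ alone: $\Psi$ comes from the $g$-exponential map. Nothing justifies the local form $q=-Lu+\mathcal N(u,q)$.

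The operator you claim is also wrong. Linearizing the Riccati equation, $e^{2u}\delta$ is conformal normal to first order iff $\mathrm{Ric}(\partial_r,\partial_r)=0$, i.e.\ $\Delta u+(n-2)\partial_r^2u=0$. So a harmonic $u=r^\ell Y_\ell$ with $\ell\ge2$ solves your linear equation with $q=0$, yet is not conformal normal.

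The paper instead works with the distance quotient $w=\ln(\rho/r)$. It first proves $w$ is smooth across $0$, via the Taylor recursion for $|d(\rho^2)|_{g_0}^2=4\Phi\rho^2$. It then uses $\Phi=e^{2w}Q$ to obtain the local second-order equation $E_{g_0}(w)=0$. This equation has linearization $N=(n-1)\partial_s^2+\Delta_S$ and forcing $B=-\partial_sq=e^{-s^2}A$. The indicial roots are $\pm\mu_\ell$ with $\mu_\ell\approx\ell/\sqrt{n-1}$, not $\ell+n-2$ and $-\ell$. That is where the factor $n-1$ in $c_n$ comes from, so your constant cannot come out of ``optimizing $s_1$''.

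You also never produce a flat unknown. The paper subtracts the M\"obius solution $w_\beta$, with $\beta=\frac14d\ln\Phi(0)$. It proves $v=w-w_\beta$ is flat, using $\langle Nv_j,v_j\rangle\ge(n-2)j(j-1)\|v_j\|^2$. Flatness is what makes the weighted norm of $v$ at weight $\approx\mu_\ell$ finite for every $\ell$. Since $E_\delta(w_\beta)=0$, it also makes the residual $R_\beta=E_{g_0}(w_\beta)-B=O(e^{-s^2-s})$ Gaussian. Your remark that $\Pi_\ell u=O(r^\ell)$ concerns one component only. The nonlinearity couples all degrees, and the low-order Taylor terms of $u$ do not lie in $e^{-\sigma s}L^2$ for large $\sigma$.

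\textbf{The mechanism that produces the rate.} A weighted estimate at weight exactly $\mu=\mu_\ell$ is unavailable, because $\mu_\ell$ is an indicial root: the conjugated multiplier vanishes at $\xi=0$. Even if such an estimate held, it would bound the right side of the moment identity by about $\mu e^{\mu^2/4}$. That is the same size as $I_\ell\sim e^{\mu^2/4}$, so it gives no decay.

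The paper estimates at $\sigma=\mu_\ell-a$, which stays at distance $\ge a=1/(4\sqrt{n-1})$ from all roots. This gives $\mathcal D_\sigma(z)\le C_n\sigma\|e^{\sigma s}Nz\|$ and hence $\mathcal D_\sigma(z)\lesssim\sigma e^{\sigma^2/4}$. The $e^{-s}$ decay of the coefficients of $R$ then lets one pair $Rz$ with $e^{\mu s}=e^{\sigma s}e^{as}$ at cost $\mathcal D_\sigma(z)$, since $a<1$. The result is $\|A_\ell\|\lesssim\sigma e^{(\sigma^2-\mu^2)/4}\approx e^{-a\mu/2}$. The gap $\mu^2-\sigma^2$ is the actual source of the exponential rate.

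Absorbing $R$ into an estimate whose constant grows like $\sigma$ also forces the cutoff point to grow with $\ell$. The paper takes $L=\sigma/8$: then $\sigma e^{-L}$ is small, while the commutator costs only $e^{\mu(L+1)}\ll e^{\mu^2/4}$. With your fixed $s_0$, ``absorb and iterate'' fails for large $\ell$.
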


Theorem~\ref{main} contradicts Cao's Corollary~B
\cite[p.~916]{Cao} when applied to the conformal-normal equation.
Indeed, the Cartesian coordinates $x=r\theta$ are $g_A$-normal
coordinates, and
\[
 \det((g_A)_{ij})-1=e^{2q}-1
\]
is smooth and flat at $0$, meaning that all its Cartesian derivatives
vanish there. Thus $g_A$ satisfies assumption~(A) in
\cite[p.~915]{Cao}. The right-hand side of his normalization equation
$(**)$ is
\[
 -\frac1r\partial_r\ln\sqrt{\det(g_A)}
 =-r^{-2}e^{-(-\ln r)^2}A(\theta),
\]
which is also smooth and flat.  Corollary~B asserts that this equation
has a smooth flat solution on a sufficiently small neighborhood.
By Cao's Theorem~A, such a solution gives a conformal normal
representative. For any smooth nonanalytic $A$, this contradicts
Theorem~\ref{main}.

We briefly describe the proof of Theorem~\ref{main}. In logarithmic radius, the background volume defect is
$B=e^{-s^2}A(\theta)$. A hypothetical conformal normal representative
determines a smooth distance quotient $w=\ln(\rho/r)$.
Subtracting a M\"obius comparison solution gives a smooth flat
remainder $v$, which satisfies
\[
 (N+R)v=-B-R_\beta,\qquad N=(n-1)\partial_s^2+\Delta_S.
\]
The coefficients of $R$ are $O(e^{-s})$ and
$R_\beta=O(e^{-s^2-s})$. A weighted cylinder estimate controls the nonlinear remainder. An integral identity then relates $A_\ell$ to this remainder. Combining the identity with the weighted estimate forces the exponential decay of $A_\ell$ in \eqref{mainrate}.

The construction was motivated by a reformulation of the normalization
equation using the $b$-calculus and the Mellin transform, informed by
Mazzeo's elliptic edge theory \cite{Mazzeo1991} and its continuation
with Boris Vertman \cite{MazzeoVertman2014}. This viewpoint brings
together radial decay and the angular spectrum, revealing why estimates
at fixed weights need not yield a single smooth flat solution.
The weighted estimates required here are proved directly below.

We prove Theorem~\ref{main} in the following section~\ref{sec:proof}. Finally, in section~\ref{sec:linear}, we identify the failure of the linear inverse underlying Cao's smooth existence argument. Using \eqref{mainrate}, we construct an analytic angular function $A$ whose coefficients decay too slowly for conformal normalization. The angular series of $g_A$ defining Cao's proposed inverse diverges, preventing the first iterate in the proof of Corollary~B from being well-defined.

\section{Proof of Theorem~\ref{main}}\label{sec:proof}
Fix $A$ and write $g_0=g_A$ throughout the proof. Set
$D=r\partial_r=-\partial_s$. We use $\partial_b^\alpha$ for a fixed
composition of $|\alpha|$ cylindrical derivatives, namely $\partial_s$
and smooth angular vector fields, taken in a fixed finite sphere atlas.

For $s\ge1$, elementary integration gives
\begin{equation}
0<F(s)\le\frac{e^{-s^2}}{2s},\qquad F'(s)=-e^{-s^2},\qquad
 |\partial_b^\alpha q|\le C_\alpha(1+s)^{|\alpha|}e^{-s^2}.\label{qder}
\end{equation}
In Cartesian coordinates,
\begin{equation}
\begin{aligned}
& (g_0)_{ij}
 =\delta_{ij}+(\kappa^{-1}-1)
       \left(\delta_{ij}-\frac{x_ix_j}{r^2}\right),\\
& |\partial_x^\alpha((g_0)_{ij}-\delta_{ij})|
 \le C_\alpha(1+s)^{|\alpha|}e^{-s^2+|\alpha|s}.
 \end{aligned}\label{cartmetric}
\end{equation}

Define
\begin{equation}
\begin{gathered}
 h=\kappa^{-1}h_0,\qquad J=\sqrt{\det((g_0)_{ij})}=e^q,\\
 dV_{g_0}=r^{n-1}J\,dr\,d\omega,\qquad
 B=D\ln J=Dq=-\partial_s q=e^{-s^2}A(\theta).
 \end{gathered}\label{B}
\end{equation}
  $B$ is the logarithmic radial
derivative of the ratio of the metric volume density to Euclidean volume
density in the fixed Cartesian chart.  We have
\[
 H_r-\frac1r=\frac{B}{(n-1)r}.
\]

The same argument as for \eqref{cartmetric} shows that $e^{2q}-1$
and the nonhomogeneous term $-r^{-2}e^{-s^2}A(\theta)$ extend smoothly
and flatly across $0$.

\subsection{The normalization equation}\label{sec:distance}
Let $r_1$ and $\Phi$ be as in Theorem~\ref{main}, and put
$g=\Phi g_0$. On a sufficiently small normal neighborhood, its distance
$\rho$ from $0$ has smooth square and satisfies
\begin{equation}\label{squareeikonal}
 |d(\rho^2)|_{g_0}^2=4\Phi\rho^2.
\end{equation}
We first show that $w=\ln(\rho/r)$ extends smoothly across $0$.
Write $(\rho^2)_j$ and $\Phi_j$ for the homogeneous Taylor terms of
degree $j$, and put $c=\Phi(0)>0$. Then $(\rho^2)_2=cr^2$.
Since $g_0-\delta$ is flat, the degree-$j$ part of
\eqref{squareeikonal}, for $j\ge3$, is
\[
 4c(j-1)(\rho^2)_j
 =4\sum_{i=1}^{j-2}\Phi_i(\rho^2)_{j-i}
  -\sum_{\substack{i,k\ge3\\i+k=j+2}}
       \ip{d(\rho^2)_i}{d(\rho^2)_k}_{\delta}.
\]
The gradient inner product of two polynomials divisible by $r^2$
is again divisible by $r^2$. Thus induction shows that every
$(\rho^2)_j$ is divisible by $r^2$. For every $M\ge0$, there is
therefore a polynomial $P_M$ of degree at most $M$ such that
\[
 \begin{gathered}
 j_0^{M+2}(\rho^2)=r^2P_M,\qquad
 \mathcal R_M=\rho^2-r^2P_M,\\
 \partial_x^\alpha\mathcal R_M=O(r^{M+3-|\alpha|}),\qquad
 \partial_x^\alpha(\mathcal R_M/r^2)=O(r^{M+1-|\alpha|})
 \quad(|\alpha|\le M).
 \end{gathered}
\]
The last remainder extends by zero as a $C^M$ function. Since $M$
is arbitrary, $\rho^2/r^2$ is smooth with value $c$ at $0$, and
\[
 w=\tfrac12\ln(\rho^2/r^2)\in C^\infty,\qquad \rho=re^w.
\]

\begin{remark}
At degree three, the recurrence gives
$8c(\rho^2)_3=4cr^2d\Phi(0)[x]$. Hence, for $M\ge1$,
\[
 \begin{gathered}
 P_M(0)=\Phi(0),\qquad dP_M(0)=\tfrac12d\Phi(0),
 \\
 w(0)=\tfrac12\ln\Phi(0),\qquad
 \beta:=dw(0)=\tfrac14d\ln\Phi(0).
 \end{gathered}
\]
The eikonal equation for $\rho=re^w$ also gives
\begin{equation}\label{Q}
 \Phi=|d\rho|_{g_0}^2=e^{2w}Q,\qquad
 Q=(1+Dw)^2+\kappa|d_Sw|_{h_0}^2.
\end{equation}
\end{remark}

\subsubsection{The scalar equation}\label{subsec:scalar}
The conformal transformation formula is
\begin{equation}\label{conflap}
 \Delta_g\rho=\Phi^{-1}\left(
 \Delta_{g_0}\rho+\frac{n-2}{2}
       \ip{d\ln\Phi}{d\rho}_{g_0}\right).
\end{equation}
Substituting \eqref{Q} into \eqref{conflap} and using \eqref{cn} yields
\begin{equation}\label{E}
 \begin{aligned}
 E_{g_0}(w)
 &:=B+D^2w+\Delta_hw+B\,Dw\\
 &\quad+\frac{n-2}{2Q}
     \left((1+Dw)DQ+\kappa\ip{d_SQ}{d_Sw}_{h_0}\right)=0.
 \end{aligned}
\end{equation}
This is Cao's scalar distance-quotient equation
\cite[p.~906, the displayed calculation preceding (1.14)]{Cao},
written in polar notation. It depends smoothly on the first and
second cylindrical derivatives of $w$ wherever $Q>0$, satisfies
$E_{g_0}(0)=B$, and has Euclidean linearization
\begin{equation}\label{N}
 N=E_\delta'(0)=(n-1)D^2+\Delta_S
                =(n-1)\partial_s^2+\Delta_S.
\end{equation}
Constant rescaling of $g$ preserves \eqref{cn}; we henceforth
normalize $\Phi(0)=1$, so that $w(0)=0$.

\subsection{The Euclidean M\"obius solution}\label{sec:nonlinear}
With the Euclidean identification of $\beta$ and its dual vector, set
\[
 W_\beta=1-2\beta\cdot x+|\beta|^2r^2,\qquad
 T_\beta(x)=\frac{x-\beta r^2}{W_\beta(x)}.
\]
This is the composition $I\circ(I-\beta)$, where $I(x)=x/|x|^2$,
and it extends smoothly across $0$. Its pullback metric and distance
from $0$ are
\[
 T_\beta^*\delta=W_\beta^{-2}\delta,\qquad
 \rho_\beta=|T_\beta(x)|=\frac r{\sqrt{W_\beta}},\qquad
 w_\beta=-\tfrac12\ln W_\beta.
\]
Since $T_\beta$ is a local isometry to Euclidean space,
$E_\delta(w_\beta)=0$. Also $w_\beta(0)=0$ and $dw_\beta(0)=\beta$,
so $v:=w-w_\beta$ has zero first jet.

We claim that $v$ is flat. If its terms of degrees less than $j\ge2$
vanish, let $v_j$ be its homogeneous degree-$j$ term. The first and
second cylindrical derivatives of $w_\beta+\tau v$ are $O(r)$, so
\eqref{E} shows that $E_\delta'(w_\beta+\tau v)-N$ has order at most
two and coefficients $O(r)$, uniformly for $0\le\tau\le1$.
Since $g_0-\delta$ is flat, it contributes no Taylor terms.
Consequently, the degree-$j$ part of
$E_{g_0}(w)-E_\delta(w_\beta)=0$ is $Nv_j=0$.
The restriction of a homogeneous polynomial of degree $j$ to $S$
contains only spherical harmonic degrees $j,j-2,\ldots$. Its sphere
Dirichlet energy is therefore at most $j(j+n-2)$ times its squared
$L^2$ norm. Restricting all quantities below to $S$, we obtain
\begin{equation}\label{formal}
 \begin{aligned}
 0=\ip{Nv_j}{v_j}_{L^2(S)}
  &=(n-1)j^2\|v_j\|_{L^2(S)}^2-\|d_Sv_j\|_{L^2(S)}^2\\
  &\ge(n-2)j(j-1)\|v_j\|_{L^2(S)}^2.
 \end{aligned}
\end{equation}
Thus $v_j=0$, and induction proves the claim.

\subsection{The nonlinear error estimates}\label{subsec:error}
Put $\mathcal M=1+\|A\|_{C^1(S)}+|\beta|+|\beta|^2$.
Choose $0<r_0<\min(r_1,e^{-1})$ so that all preceding constructions are defined
on $\overline B_{r_0}(0)$ and
\begin{equation}\label{radiuschoice}
 \|v\|_{C^2(\overline B_{r_0}(0))}\le1,\qquad
 r_0\mathcal M\le\frac1{16}.
\end{equation}
Here the $C^2$ norm uses Euclidean tensor norms. This choice is possible because $v$ is smooth and flat. Write
$s_0=-\ln r_0$. Define
\[
 R_\beta=E_{g_0}(w_\beta)-B,\qquad
 w_\tau=w_\beta+\tau v\quad(0\le\tau\le1),
\]
and, for a smooth test function $z$ on the cylinder, set
\begin{equation}\label{P}
 \begin{gathered}
 E_{g_0}'(w)z
 =\left.\frac{d}{d\lambda}E_{g_0}(w+\lambda z)\right|_{\lambda=0},\\
 Pz=\int_0^1E_{g_0}'(w_\tau)z\dd\tau,\qquad R=P-N.
 \end{gathered}
\end{equation}
The fundamental theorem of calculus gives
$Pv=E_{g_0}(w)-E_{g_0}(w_\beta)$, and hence
\begin{equation}\label{Pv}
 (N+R)v=-B-R_\beta.
\end{equation}

\begin{lemma}\label{lem:nonlinear-errors}
There is a constant $D_n$, depending only on $n$, such that,
for $s\ge s_0$,
\begin{equation}\label{residual}
 \|R_\beta(s,\cdot)\|_{L^2(S)}
 \le D_n\mathcal M^2e^{-s^2-s}.
\end{equation}
For every smooth function $z$ on the cylinder,
\begin{equation}\label{Rbound}
 \|Rz(s,\cdot)\|_{L^2(S)}
 \le D_n\mathcal M e^{-s}
       \sum_{1\le j+k\le2}
       \|\partial_s^j\nabla_S^kz(s,\cdot)\|_{L^2(S)}.
\end{equation}
\end{lemma}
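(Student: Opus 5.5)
The plan is to write $E_{g_0}$ explicitly as a smooth function of the jets of $w$ and of the metric data $(B,\kappa)$, and to compare it with $E_\delta$. The comparison uses two facts already established: $E_\delta(w_\beta)=0$ and $E'_\delta(0)=N$.

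From \eqref{E}, the operator $E_{g_0}(w)$ has the form
\[
\mathcal F\bigl(B,\ \kappa,\ d_S\kappa,\ \partial_s\kappa,\ w\text{-jets up to order }2\bigr),
\]
where $\mathcal F$ is smooth wherever $Q>0$. Here $\Delta_h=\kappa\Delta_S+\tfrac{n-3}{2}\ip{d_S\kappa}{d_S\cdot}_{h_0}$, up to the sign conventions for conformal rescaling of $h_0$ on the $(n-1)$-sphere. Setting $B=0$ and $\kappa=1$ recovers $E_\delta$.

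\textbf{Bound \eqref{residual}.} First I write
\[
R_\beta=E_{g_0}(w_\beta)-B-E_\delta(w_\beta),
\]
which is legitimate since $E_\delta(w_\beta)=0$. The difference $E_{g_0}(w_\beta)-E_\delta(w_\beta)-B$ consists of terms that are linear in the defects $B$, $\kappa-1$, $d_S\kappa$, $\partial_s\kappa$, and each such term is multiplied by derivatives of $w_\beta$. The single term $B$ without any derivative of $w_\beta$ cancels exactly.

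Next I bound the ingredients. Since $w_\beta=-\tfrac12\ln W_\beta$ with $W_\beta=1-2\beta\cdot x+|\beta|^2r^2$, the condition $r\le r_0$ together with \eqref{radiuschoice} gives $|W_\beta-1|\le 3/16$. The cylindrical derivatives $D=r\partial_r$ and $d_S$ of $w_\beta$ up to order two are then bounded by $C\mathcal M e^{-s}$. For the defects, $\kappa-1=O(e^{-s^2})$ and $\partial_s\kappa,\ d_S\kappa=O(s\,e^{-s^2})$ with constant $\|A\|_{C^1}\le\mathcal M$, using \eqref{qder}.

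The factor of $s$ in $\partial_s\kappa$ must not survive. I would handle this by writing $\partial_s q=-B$ exactly, so that $\partial_s\kappa=\tfrac{2}{n-1}\kappa B=O(\mathcal M e^{-s^2})$ with no $s$. Also $d_S\kappa=O(F\mathcal M)=O(e^{-s^2})$. Multiplying the defect bounds by the $w_\beta$ bounds gives $\mathcal M^2e^{-s^2-s}$. The quadratic terms in $w_\beta$, such as $B\,Dw_\beta$ and the $Q$-terms, carry extra factors of $\mathcal M e^{-s}\le 1/16$ and are absorbed.

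\textbf{Bound \eqref{Rbound}.} I split
\[
R z=\int_0^1\bigl(E'_{g_0}(w_\tau)-E'_{\delta}(w_\tau)\bigr)z\,d\tau+\int_0^1\bigl(E'_\delta(w_\tau)-E'_\delta(0)\bigr)z\,d\tau .
\]
In the second integral, the coefficients of $E'_\delta(u)-N$ are smooth functions of the 2-jet of $u$ that vanish at $u=0$. This term has no zeroth-order part, because $E$ depends on $w$ only through its derivatives. The 2-jet of $w_\tau$ is controlled by $|d w_\beta|$ terms of size $\mathcal M e^{-s}$ and by $v$-terms. For $v$, the Euclidean $C^2$ bound in \eqref{radiuschoice} converts under cylindrical derivatives to $O(e^{-s})$, since $\partial_s=-r\partial_r$ and angular fields are $r\partial_x$. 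The first integral has coefficients bounded by the background defects, which are $O(\mathcal M e^{-s^2})\le O(\mathcal M e^{-s})$.

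Throughout, the positivity of $Q$ follows from $|Dw_\tau|,\ |d_Sw_\tau|\le 1/4$. All pointwise coefficient bounds then convert into $L^2(S)$ bounds of the first and second cylindrical derivatives of $z$, with $\nabla_S$ standing for covariant derivatives.

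The main obstacle I expect is bookkeeping: checking that every coefficient in $R$ and every term of $R_\beta$ actually carries a factor of $e^{-s}$ (respectively $e^{-s^2-s}$), with no stray polynomial factor of $s$ and with linear dependence on $\mathcal M$ in \eqref{Rbound}. Using the identity $\partial_s q=-B$, and noting that $\|v\|_{C^2}\le 1$ contributes only an $O(1)$ factor (so that $\mathcal M\ge1$ absorbs it), should resolve this.
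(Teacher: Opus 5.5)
Your proposal is correct and follows essentially the paper's route. The paper likewise writes $E_{g_0}(w)-B$ as a smooth function $\mathcal F(K,\mathcal J(w))$ of the background data $K=(q,B,d_Sq)$ and the derivative jet $\mathcal J(w)$. It uses $\mathcal F(K,0)=0$ and $\mathcal F(0,\cdot)=E_\delta$ to get $|R_\beta|\le C_n|K|\,|\mathcal J(w_\beta)|$, and it bounds the coefficients of $E_{g_0}'(w_\tau)-N$ by $C_n(|K|+|\mathcal J(w_\tau)|)$, which is exactly your two-piece split of $R$.

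Two minor slips do not affect the bounds. The correct formula is $\Delta_h=\kappa\Delta_S-\frac{n-3}{2}\ip{d_S\kappa}{d_S\cdot}_{h_0}$. Also, $B\,Dw_\beta$ is a defect-times-jet term, so it gives the main $\mathcal M^2e^{-s^2-s}$ contribution rather than being a quadratic term that gets absorbed.
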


\begin{proof}
Let
\[
 \mathcal J(w)=(Dw,d_Sw,D^2w,d_SDw,\nabla_S^2w),
 \qquad K=(q,B,d_Sq),
\]
with the sum of the round tensor norms on each tuple. These are
pointwise finite-dimensional variables; $\mathcal J$ contains no
zeroth-order term. Differentiating the explicit $W_\beta$ gives
$|\mathcal J(W_\beta)|\le C_n(|\beta|r+|\beta|^2r^2)$.
Since $W_\beta\ge(1-|\beta|r)^2$ and $|\beta|r\le1/16$,
the chain rule for $w_\beta=-\frac12\ln W_\beta$ yields
\[
 |\mathcal J(w_\beta)|\le C_n|\beta|r.
\]
The zero first jet of $v$ and \eqref{radiuschoice} imply
$|d_xv|\le r$ and $|\nabla_x^2v|\le1$. Cylindrical differentiation
therefore gives $|\mathcal J(v)|\le C_nr^2$. Together with
\eqref{qder} and $B=e^{-s^2}A$, these estimates imply
\[
 |\mathcal J(w_\tau)|\le C_n\mathcal M e^{-s},\qquad
 |K|\le C_n\|A\|_{C^1(S)}e^{-s^2}.
\]
Moreover, uniformly in $\tau$,
\[
 |Dw_\tau|
 \le\frac{|\beta|r(1+|\beta|r)}{(1-|\beta|r)^2}+r^2<\frac18.
\]
Thus $Q(w_\tau)\ge(1+Dw_\tau)^2\ge1/2$. The same lower bound
holds if $q$ is replaced by any multiple in $[0,1]$ and
$\mathcal J(w_\tau)$ by any multiple in $[0,1]$.

To make the coefficient dependence explicit, for a generic $w$ put
$b=1+Dw$ and $p=\nabla_Sw$, so that $Q=b^2+\kappa|p|^2$.
Using
\[
 \Delta_hw=\kappa\Delta_Sw+\frac{n-3}{n-1}
                 \kappa\ip{d_Sq}{d_Sw}_{h_0},\qquad
 D\kappa=-\frac{2\kappa B}{n-1},\qquad
 d_S\kappa=-\frac{2\kappa}{n-1}d_Sq,
\]
and differentiating $Q$ in \eqref{E}, we obtain
\[
 \begin{aligned}
 E_{g_0}(w)-B={}&
 \left(1+\frac{(n-2)b^2}{Q}\right)D^2w
 +\frac{2(n-2)\kappa b}{Q}\ip{p}{\nabla_SDw}_{h_0}\\
 &+\kappa\Delta_Sw
 +\frac{(n-2)\kappa^2}{Q}\nabla_S^2w(p,p)
 +B\,Dw+\frac{n-3}{n-1}\kappa\ip{\nabla_Sq}{p}_{h_0}\\
 &-\frac{n-2}{n-1}\frac{\kappa}{Q}
       \bigl(bB+\kappa\ip{\nabla_Sq}{p}_{h_0}\bigr)|p|^2.
 \end{aligned}
\]
This expression defines a smooth function
$\mathcal F(K,V)$ with $V=\mathcal J(w)$ and $\kappa=e^{-2q/(n-1)}$.
The preceding bounds place its arguments and the segments joining
them to zero in a fixed bounded set on which $Q\ge1/2$.
All first and second derivatives of $\mathcal F$ in these
finite-dimensional variables are consequently bounded by constants
depending only on $n$. The displayed formula gives
\[
 \mathcal F(K,0)=0,\qquad
 \mathcal F(0,\mathcal J(w))=E_\delta(w),\qquad
 D_V\mathcal F(0,0)\mathcal J(z)=Nz.
\]
Since $D_K\mathcal F(K,0)=0$, two applications of the fundamental
theorem of calculus, first in $K$ and then in $V$, give
\[
 |\mathcal F(K,V)-\mathcal F(0,V)|\le C_n|K||V|.
\]
As $E_\delta(w_\beta)=0$, it follows that
\[
 |R_\beta|
 =|\mathcal F(K,\mathcal J(w_\beta))
      -\mathcal F(0,\mathcal J(w_\beta))|
 \le C_n\mathcal M^2e^{-s^2-s}.
\]
Taking the sphere norm proves \eqref{residual}.

Likewise, the mean-value estimate for $D_V\mathcal F$ gives
\[
 \begin{aligned}
 |(E_{g_0}'(w_\tau)-N)z|
 &\le C_n\bigl(|K|+|\mathcal J(w_\tau)|\bigr)|\mathcal J(z)|\\
 &\le C_n\mathcal M e^{-s}|\mathcal J(z)|.
 \end{aligned}
\]
This also bounds every coefficient of $E_{g_0}'(w_\tau)-N$:
it has order at most two, has no zeroth-order term, and its
coefficients in a fixed finite sphere atlas are bounded by
$C_n\mathcal M e^{-s}$. Integrating in $\tau$, then taking the sphere
$L^2$ norm and using the triangle inequality, proves \eqref{Rbound}
and the same coefficient bounds for $R$.
\end{proof}

\subsection{The weighted cylinder estimate}\label{sec:weighted}
Set
\[
 \lambda_k=k(k+n-2),\qquad
 \mu_k=\sqrt{\lambda_k/(n-1)}.
\]
The function $f(x)=\sqrt{x(x+n-2)/(n-1)}$ satisfies
$f'(x)\ge1/\sqrt{n-1}$ for $x>0$, and $\mu_1-\mu_0=1$.
Thus $\mu_{k+1}-\mu_k\ge1/\sqrt{n-1}$. For every sufficiently
large degree $\ell$, choose
\begin{equation}\label{weight}
 a=\frac1{4\sqrt{n-1}},\qquad
 \sigma=\mu_\ell-a\ge1,\qquad
 \dist(\sigma,\{\mu_k:k\ge0\})\ge a.
\end{equation}
Indeed, the distance to $\mu_\ell$ is $a$, whereas the distance to
any other root is at least $3a$. In particular, $0<a<1$.

In this and the next subsection, unlabelled $L^2$ norms are taken on
$\R\times S$ with measure $ds\,d\omega$. Define the weighted
derivative norm
\[
 \mathcal D_\sigma(z)^2
 =\sum_{j+k\le2}
   \|e^{\sigma s}\partial_s^j\nabla_S^kz\|_{L^2}^2.
\]
We claim that, uniformly for the weights in \eqref{weight},
\begin{equation}\label{Nest}
 \mathcal D_\sigma(z)
 \le C_n\sigma\|e^{\sigma s}Nz\|_{L^2}.
\end{equation}
Here and below, constants $C_n$ depend only on $n$ and may increase
from one occurrence to the next.

To prove \eqref{Nest}, first take $z$ smooth and compactly supported,
put $Z=e^{\sigma s}z$, and take the unitary Fourier transform in $s$.
The conjugated operator
\[
 N_\sigma=e^{\sigma s}Ne^{-\sigma s}
 =(n-1)(\partial_s-\sigma)^2+\Delta_S
\]
has multiplier
\[
 p_\sigma(\xi,k)
 =(n-1)(i\xi-\sigma-\mu_k)(i\xi-\sigma+\mu_k).
\]
Writing $d_\pm=\sqrt{\xi^2+(\mu_k\pm\sigma)^2}$, we have
$d_-\ge a$ and $d_+\le d_-+2\sigma$. Consequently,
\[
 \frac{\sigma^2+\xi^2+\lambda_k}{|p_\sigma(\xi,k)|}
 \le\frac{d_+}{d_-}
 \le1+\frac{2\sigma}{a}
 \le C_n\sigma.
\]
The radial derivatives of $z$ become powers of $i\xi-\sigma$ after
conjugation. For the angular derivatives, integration by parts and
the integrated Bochner identity on the round sphere give
\[
 \|\nabla_Sf\|_2^2=\langle-\Delta_Sf,f\rangle,
 \qquad
 \|\nabla_S^2f\|_2^2
 =\|\Delta_Sf\|_2^2-(n-2)\|\nabla_Sf\|_2^2
 \le\|\Delta_Sf\|_2^2.
\]
Each derivative term in $\mathcal D_\sigma(z)$ is therefore bounded
by the multiplier $\sigma^2+\xi^2+\lambda_k$. Plancherel yields
\[
 \begin{aligned}
 \mathcal D_\sigma(z)^2
 &\le C_n\sum_{k\ge0}\int_\R
       (\sigma^2+\xi^2+\lambda_k)^2
       \|\widehat{\Pi_kZ}(\xi)\|_{L^2(S)}^2\dd\xi\\
 &\le C_n\sigma^2\|N_\sigma Z\|_{L^2}^2,
 \end{aligned}
\]
which proves \eqref{Nest}. Density extends it to every $z$ with
$\mathcal D_\sigma(z)<\infty$.

Choose a fixed smooth function $\chi$, with $0\le\chi\le1$, which is
zero on $(-\infty,0]$ and one on $[1,\infty)$. Set
\[
 L=\frac\sigma8,\qquad
 \eta(s)=\chi(s-L),\qquad z=\eta v.
\]
We require $\sigma\ge16s_0$ and increase its dimension-dependent
lower bound as needed below. Since $s_0\ge1$, this gives
$L\ge s_0+1$. Define $z=0$ for $s\le L$; no extension of $v$ or
of the coefficients of $R$ is involved. Flatness of $v$ implies
$\mathcal D_\sigma(z)<\infty$ for each fixed $\sigma$: choose a
fixed decay order greater than $\sigma$ for its first two
cylindrical derivatives. Thus \eqref{Nest} applies to $z$.

Using $N=P-R$ and \eqref{Rbound}, we obtain directly
\[
 \mathcal D_\sigma(z)
 \le C_n\sigma\|e^{\sigma s}Pz\|_{L^2}
       +C_n\mathcal M\sigma e^{-L}\mathcal D_\sigma(z).
\]
Both $Pz$ and $Rz$ are used only on $s\ge L$ and are zero below $L$.
By \eqref{radiuschoice},
$\mathcal M e^{-s_0}\le1/16$, so
\[
 \mathcal M\sigma e^{-L}
 \le\frac\sigma{16}e^{s_0-\sigma/8}
 \le\frac\sigma{16}e^{-\sigma/16}.
\]
For sufficiently large $\sigma$, depending only on $n$, this
absorbs the last term and proves
\[
 \mathcal D_\sigma(z)
 \le C_n\sigma\|e^{\sigma s}Pz\|_{L^2}.
\]

Since $Pv=-B-R_\beta$, the cutoff equation is
\begin{equation}\label{Pcut}
 Pz=-\eta B-\eta R_\beta+c_\sigma,\qquad
 c_\sigma=[P,\eta]v,\qquad
 \supp c_\sigma\subset[L,L+1].
\end{equation}
The commutator contains only $v$ and its first derivatives,
multiplied by bounded coefficients of $P$ and derivatives of $\chi$.
Indeed, a radial second derivative contributes
$\eta''v+2\eta'\partial_sv$, a mixed derivative contributes
$\eta'\partial_Av$, and a radial first derivative contributes
$\eta'v$. The bounds in \eqref{radiuschoice} and \eqref{Rbound}
therefore give
\[
 \|c_\sigma(s)\|_{L^2(S)}\le C_n\mathcal M.
\]
For the sources, completing the square gives
\[
 \int_L^\infty e^{2\sigma s-2s^2}\dd s
 \le\sqrt{\pi/2}\,e^{\sigma^2/2}.
\]
Together with \eqref{B} and \eqref{residual}, this implies
\[
 \begin{aligned}
 \|e^{\sigma s}\eta B\|_{L^2}
  +\|e^{\sigma s}\eta R_\beta\|_{L^2}
 &\le C_n\mathcal M^2e^{\sigma^2/4},\\
 \|e^{\sigma s}c_\sigma\|_{L^2}
 &\le C_n\mathcal M e^{\sigma(L+1)}.
 \end{aligned}
\]
For $\sigma\ge8$, our choice of $L$ gives
$\sigma(L+1)=\sigma^2/8+\sigma\le\sigma^2/4$.
We conclude that
\begin{equation}\label{weightedz}
 \mathcal D_\sigma(z)
 \le C_n\mathcal M^2\sigma e^{\sigma^2/4}.
\end{equation}

\subsection{The integral identity and coefficient estimate}\label{sec:moments}
Fix a sufficiently large degree $\ell$, write $\mu=\mu_\ell$, and
retain $\sigma=\mu-a$ and $L=\sigma/8$. Projecting \eqref{Pcut}
onto degree $\ell$, with $z_\ell=\Pi_\ell z$, gives
\[
 (n-1)(z_\ell''-\mu^2z_\ell)
 =-\eta(s)e^{-s^2}A_\ell
       +\Pi_\ell(-\eta R_\beta-Rz+c_\sigma).
\]
Multiply by $e^{\mu s}$ and integrate from $L$ to infinity. The
left side integrates to
\[
 (n-1)\left[e^{\mu s}(z_\ell'-\mu z_\ell)\right]_L^\infty=0.
\]
At $L$, both $z$ and $\partial_sz$ vanish by the cutoff. At infinity,
flatness gives $v,\partial_sv=O(e^{-Ns})$ for every fixed $N$;
choosing $N>\mu$ proves the endpoint vanishes for this fixed degree.
The integrals converge for the same reason. Hence
\begin{equation}\label{moment}
 \begin{aligned}
 I_\ell A_\ell
 &=\int_L^\infty e^{\mu s}
        \Pi_\ell(-\eta R_\beta-Rz+c_\sigma)\dd s,\\
 I_\ell&=\int_L^\infty\eta(s)e^{\mu s-s^2}\dd s.
 \end{aligned}
\end{equation}
This is an identity in the degree-$\ell$ harmonic space; the cutoff
does not impose boundary data on the original function $v$.

We estimate its terms before dividing by $I_\ell$. Since
$\mu=\sigma+a$, \eqref{Rbound} and Cauchy--Schwarz give
\[
 \begin{aligned}
 \int_L^\infty e^{\mu s}\|Rz(s)\|_{L^2(S)}\dd s
 &\le C_n\mathcal M
   \left(\int_L^\infty e^{-2(1-a)s}\dd s\right)^{1/2}
   \mathcal D_\sigma(z)\\
 &\le C_n\mathcal M^3\sigma e^{\sigma^2/4}.
 \end{aligned}
\]
The scalar integral is uniformly bounded because $a<1$ and $L>0$;
the finite sum of derivatives is absorbed into $C_n$.
For the residual and commutator, \eqref{residual} and their support
properties yield
\[
 \begin{aligned}
 \int_L^\infty e^{\mu s}\|\eta R_\beta(s)\|_{L^2(S)}\dd s
 &\le C_n\mathcal M^2
       \int_L^\infty e^{(\mu-1)s-s^2}\dd s\\
 &\le C_n\mathcal M^2e^{(\mu-1)^2/4}
 \le C_n\mathcal M^2e^{\sigma^2/4},\\
 \int_L^\infty e^{\mu s}\|c_\sigma(s)\|_{L^2(S)}\dd s
 &\le C_n\mathcal M e^{\mu(L+1)}.
 \end{aligned}
\]
Here $0\le\mu-1<\sigma$ for the degrees under consideration.

For sufficiently large $\mu$, the interval
$[\mu/2-1,\mu/2+1]$ lies beyond $L+1$, where $\eta=1$.
Another square completion therefore gives
\[
 I_\ell\ge
 \int_{\mu/2-1}^{\mu/2+1}e^{\mu s-s^2}\dd s
 =c_0e^{\mu^2/4},\qquad
 c_0=\int_{-1}^1e^{-y^2}\dd y>0.
\]
Since $\Pi_\ell$ is an orthogonal projection, \eqref{moment} now
implies
\[
 \begin{aligned}
 \|A_\ell\|_{L^2(S)}
 &\le C_n\mathcal M^3
   \left((1+\sigma)e^{(\sigma^2-\mu^2)/4}
             +e^{\mu(L+1)-\mu^2/4}\right)\\
 &\le C_n\mathcal M^3
   \left((1+\mu)e^{-a\mu/2+a^2/4}
             +e^{-\mu^2/16}\right)
 \le C_n\mathcal M^3e^{-a\mu/4}.
 \end{aligned}
\]
In the second line we used $L=(\mu-a)/8$ and $\mu\ge16$; in the
last line, the polynomial factor is absorbed into half of the
linear exponential decay. Since $\mu_\ell\ge\ell/\sqrt{n-1}$,
this proves the required rate
$a/(4\sqrt{n-1})=c_n=1/[16(n-1)]$ for all sufficiently large degrees.

To record the constant dependence, choose $K_n$ sufficiently large,
depending only on $n$, and set
\[
 \ell_0=\left\lceil K_n(1+s_0)\right\rceil.
\]
For $\ell\ge\ell_0$, the bound
$\sigma\ge\ell/\sqrt{n-1}-a$ ensures all the conditions above:
$\sigma\ge16s_0$, the fixed lower bound needed for absorption,
and the numerical conditions for the cutoff and Gaussian integrals.
For $0\le\ell<\ell_0$, orthogonal projection gives
\[
 e^{c_n\ell}\|A_\ell\|_{L^2(S)}
 \le e^{c_n\ell_0}\|A\|_{L^2(S)}.
\]
Thus, with
\[
 C=C_n\mathcal M^3+e^{c_n\ell_0}\|A\|_{L^2(S)},
\]
we have
\begin{equation}\label{Adecay}
 \|A_\ell\|_{L^2(S)}\le Ce^{-c_n\ell}\qquad(\ell\ge0).
\end{equation}
Because $s_0=-\ln r_0$, $\mathcal M\le(16r_0)^{-1}$, and
$\|A\|_{L^2(S)}\le|S|^{1/2}\|A\|_{C^1(S)}$, this constant has
the dependence asserted in Theorem~\ref{main}. The selected radius
may depend on the full conformal normalizer; once it is fixed, no
higher derivative norm or additional dependence on the original
value of $\Phi(0)$ enters the estimate.

Since the dimensions of the spherical harmonic spaces grow at most
polynomially, \eqref{Adecay} implies that the homogeneous harmonic
extensions $H_\ell(r\theta)=r^\ell A_\ell(\theta)$ have a locally
uniformly convergent sum on $|x|<e^{c_n}$.
This sum is harmonic and hence real analytic, and its restriction
to $S$ is $A$, completing the proof of Theorem~\ref{main}.

\section{The failure of Cao's linear inverse argument}\label{sec:linear}
For each $\ell\ge1$, choose a real spherical harmonic $\psi_\ell$ of
degree $\ell$ with $\|\psi_\ell\|_{L^2(S)}=1$, and define
\begin{equation}
A(\theta)=\sum_{\ell=1}^{\infty}
                   e^{-c_n\ell/2}\psi_\ell(\theta).\label{introanalytic}
\end{equation}
The function \eqref{introanalytic} is real analytic on $S$, but
the metric $g_A$ admits no smooth conformal normal representative
near $0$.

For Cao's scaling linearization
\[
 \mathscr L_0=\Delta_{\R^n}+(n-2)\partial_r^2,
\]
consider the following non-homogeneous term
\begin{equation}\label{linearforcing}
 f(r,\theta)=-r^{-2}e^{-(-\ln r)^2}A(\theta).
\end{equation}
Write
$a_\ell=e^{-c_n\ell/2}$ and
$\mu_\ell=\sqrt{\ell(\ell+n-2)/(n-1)}$.

Cao's space $Z_\rho$ is the intersection of the weighted spaces
over all differentiation orders and all decay orders
\cite[(2.2)--(2.4)]{Cao}. A smooth flat function belongs to these
spaces after restricting to a smaller ball. In particular, the function
\eqref{linearforcing} belongs to the domain of the right inverse
claimed in his Corollary~2.14.

In the notation of \cite[(2.18)--(2.19)]{Cao}, the indicial roots
for degree $\ell\ge1$ are
\[
 \gamma_\ell=\mu_\ell,\qquad \eta_\ell=-\mu_\ell.
\]
We index by degree rather than by an enumeration with multiplicity;
all eigenfunctions of the same degree have the same endpoints.
For a fixed weight $N>1$, his choice on p.~912 becomes
\[
 c_\ell^{(N)}=
 \begin{cases}0,&\mu_\ell\le N,\\1,&\mu_\ell>N,\end{cases}
 \qquad d_\ell^{(N)}=0.
\]
Indeed, $\eta_\ell\le0<N-1$. Equivalently, the change from zero
to one occurs after degree
\[
 L_N=\left\lfloor
       \frac{\sqrt{(n-2)^2+4(n-1)N^2}-(n-2)}2
       \right\rfloor.
\]
The endpoint $1$ is the outer radius of his normalized unit ball.
For the coefficient $f_\ell(r)=-a_\ell r^{-2}e^{-(-\ln r)^2}$,
the radial formula is
\[
 \begin{split}
 \beta_\ell^{(N)}(r)=\frac1{(2n-2)\mu_\ell}\bigg[
 &r^{\mu_\ell}\int_{c_\ell^{(N)}}^r
               f_\ell(t)t^{1-\mu_\ell}\dd t\\
 &-r^{-\mu_\ell}\int_0^r
               f_\ell(t)t^{1+\mu_\ell}\dd t\bigg].
 \end{split}
\]
In the proof of Corollary~2.14 on p.~914, Cao instead sets
\begin{equation}\label{flat-endpoints}
 c_\ell=d_\ell=0\quad\hbox{for every degree}.
\end{equation}
For each individual degree, both integrals in this formula converge.
For $\ell>L_N$, the change of endpoint adds
\begin{equation}\label{endpoint-difference}
 \begin{split}
 \beta_\ell^{\mathrm{flat}}(r)-\beta_\ell^{(N)}(r)
 &=\frac{r^{\mu_\ell}}{(2n-2)\mu_\ell}
       \int_0^1 f_\ell(t)t^{1-\mu_\ell}\dd t\\
 &=-\frac{a_\ell r^{\mu_\ell}}{(2n-2)\mu_\ell}e^{\mu_\ell^2/4}
       \int_0^\infty e^{-(y-\mu_\ell/2)^2}\dd y.
 \end{split}
\end{equation}
The constant mode causes no difficulty: $A_0=f_0=0$, and its
flat solution is zero.

The last paragraph on p.~914 asserts that the earlier estimates
continue to hold with \eqref{flat-endpoints}. For the function
\eqref{linearforcing}, this assertion fails, as the following
calculation shows.

In polar coordinates,
\[
 r^2\mathscr L_0=(n-1)(r\partial_r)^2+\Delta_S
              =(n-1)\partial_s^2+\Delta_S.
\]
Suppose that $\mathscr L_0v=f$ has a smooth flat solution on
$0\le r<r_{\mathrm{lin}}$.
For $s> -\ln r_{\mathrm{lin}}$, define its scalar coefficient
\[
 v_\ell(s)=\int_S v(e^{-s},\theta)\psi_\ell(\theta)\dd\omega.
\]
Projection of the equation gives
\[
 (n-1)(v_\ell''-\mu_\ell^2v_\ell)=-a_\ell e^{-s^2}.
\]
Its solution flat at $r=0$ is necessarily
\begin{equation}\label{flat-mode}
 v_\ell(s)=-\frac{a_\ell}{(n-1)\mu_\ell}
       \int_s^\infty\sinh\bigl(\mu_\ell(y-s)\bigr)e^{-y^2}\dd y.
\end{equation}
To verify the formula, denote its integral by $K_\mu(s)$.
Differentiation under the integral gives
$K_\mu''-\mu^2K_\mu=\mu e^{-s^2}$.
For fixed $\mu$, the substitution $y=s+t$ bounds the integral and
each fixed derivative by a polynomial in $s$ times $e^{-s^2}$
for sufficiently large $s$; for example,
\[
 0\le K_\mu(s)\le
 e^{-s^2}\int_0^\infty e^{-(2s-\mu)t}\dd t
 =\frac{e^{-s^2}}{2s-\mu}\quad(2s>\mu).
\]
Thus \eqref{flat-mode} is flat. Any other solution differs by
$C_\ell e^{\mu_\ell s}+D_\ell e^{-\mu_\ell s}$, and flatness
forces both constants to vanish.

Now fix any sufficiently large but finite $s$ and let $\mu\to\infty$.
Completing the square gives
\begin{align*}
 K_\mu(s)
 &=\frac12e^{\mu^2/4-\mu s}
         \int_s^\infty e^{-(y-\mu/2)^2}\dd y
       -\frac12e^{\mu s}\int_s^\infty e^{-y^2-\mu y}\dd y.
\end{align*}
The first integral tends to $\sqrt\pi$. For the second term,
put $y=s+t$ to obtain the bound
$e^{-s^2}/[2(\mu+2s)]$ when $\mu+2s>0$.
Consequently
\begin{equation}\label{linear-growth}
 |v_\ell(s)|\sim\frac{\sqrt\pi}{(2n-2)\mu_\ell}
  \exp\left(\frac{\mu_\ell^2}{4}-\mu_\ell s-\frac{c_n\ell}{2}\right).
\end{equation}
Since $(n-1)\mu_\ell^2=\ell(\ell+n-2)$, the positive quadratic
term in $\ell$ dominates the negative linear terms. Thus these
coefficients tend to infinity. On the other hand, the functions
$\psi_\ell$ form an orthonormal sequence, so the coefficients of
$v(e^{-s},\cdot)\in L^2(S)$ must tend to zero by Bessel's inequality.
Thus no smooth flat solution exists on any neighborhood of $0$.

The coefficients produced by the zero endpoints are precisely
\eqref{flat-mode}, so their angular series is not an $L^2(S)$
function at any fixed small radius. The radial cutoff in Cao's definition
of the inverse is identically one near $0$ and cannot change this conclusion.

For our metric, \eqref{B} gives exactly the right-hand side in
Cao's equation $(**)$ on p.~915:
\[
 -\frac1r\partial_r\ln\sqrt{\det(g_A)}
 =-r^{-2}e^{-(-\ln r)^2}A(\theta)=f(r,\theta).
\]
His proof of Corollary~B on pp.~917--918 uses the inverse
$\mathscr S$ of Corollary~2.14, beginning with
$\bar u_{1,\rho}=\mathscr S[f(\rho\,\cdot)]$.
This first iterate is not defined by the proposed series for $f(r,\theta).$  
Thus the analytic angular function \eqref{introanalytic} shows both
the nonlinear nonexistence established by Theorem~\ref{main} and the failure of the proposed linear inverse at the first step of Cao's
iteration. The failed step is the passage from weight-dependent
integration endpoints to one inverse on all decay orders: convergence
for each angular degree does not imply convergence of their sum.

These examples raise a general geometric question and a more
specific spectral question.

\begin{question}\label{general-question}
Given a smooth Riemannian metric $g_0$ near a point $p$ in dimension
$n\ge3$, what necessary-and-sufficient geometric condition guarantees
the existence of a smooth positive function $\Phi$ near $p$ for which
$\Phi g_0$ is conformal normal at $p$?
\end{question}

The calculation also suggests a specific relation between radial
and angular regularity. For $p>1$, consider
\[
 F_p(s)=\int_s^\infty e^{-t^p}\dd t,\qquad
 g_{p,A}=dr^2+r^2e^{2F_p(s)A(\theta)/(n-1)}h_0,
 \qquad s=-\ln r.
\]
These are flat metrics at $0$. In the linear equation, radial decay competes with angular frequency through the exponent $\lambda s-s^p$.
Its maximum is the Legendre transform
\[
 \omega_p(\lambda)
 =(p-1)\left(\frac{\lambda}{p}\right)^{p/(p-1)}.
\]
Thus the linear calculation suggests a corresponding angular
decay rate $e^{-\omega_p(\lambda)}$. In particular,
$\omega_2(\lambda)=\lambda^2/4$.

\begin{question}\label{profile-question}
Does this relation between radial and angular decay characterize
nonlinear conformal normalization? More precisely, with
\[
 \Lambda=\sqrt{\frac{-\Delta_S}{n-1}},
\]
does $g_{p,A}$ admit a smooth conformal normal representative
near $0$ if and only if
\[
 e^{\omega_p(\Lambda)-b\Lambda}A\in L^2(S)
 \qquad\text{for some }b>0?
\]
\end{question}
\bibliographystyle{amsplain}
\bibliography{references}
\end{document}